\documentclass[12pt]{amsart}
\usepackage{amsmath,amsthm,epsfig,amssymb,amscd,amsfonts} 
\usepackage[all]{xy}
\usepackage{xcolor}
\usepackage{young}
\newtheorem{thm}[subsection]{Theorem}

\newtheorem{prop}[subsection]{Proposition}
\newtheorem{conjecture}[subsection]{Conjecture}

\newtheorem{lem}[subsection]{Lemma}

\theoremstyle{definition}
\newtheorem{Def}[subsection]{Definition}

\newtheorem{proposition-definition}[subsection]{Proposition-Definition}

\begin{normalsize} \end{normalsize}

\newcommand{\SSS}{{\mathcal S}}

\numberwithin{equation}{section}

\author{F. Laytimi}

\address {F. L.: Math\'ematiques - b\^{a}t. M2, Universit\'e Lille 1,
F-59655 Villeneuve d'Ascq Cedex, France}

\email {fatima.laytimi@math.univ-lille1.fr}

\author{Nagaraj D S}

\address{Indian Institute of Science Education and Research, Tiupati}

\email{dsn@labs.iisertirupati.ac.in}

\subjclass[2010]{14F17, 14J60}

\title{Towards  Bigness equivalence }

\begin{document}

\date{}

\begin{abstract} 
	
	\bigskip
	On  the flag  variety $ \mathcal{F}l_s(E)$
	associated to a vector bundle $E,$ , a sequence $s$ and a partition $a,$ there is a line bundle $\it Q^a_s$  
        on $ \mathcal{F}l_s(E).$
        
The aim of this paper is to prove:

\begin{conjecture}   $Q^a_s $ on $ \mathcal{F}l_s(E)$ is big if only if $\pi_*(Q^a_s)=S_a(E)$ on X is big.
\end{conjecture}

The "if" part is proven here,  the "only if" part is proven under the V-bigness  hypothesis.

\end{abstract}

\maketitle

\section{Introduction} \setcounter{page}{1}
Let  $E$ be  a complex vector bundle  of rank $e$  on a projective manifold $X$
and $s$ be a sequence of integers $s=(s_0,s_1,\ldots s_m)$ such that  $$0=s_0<s_1<\ldots<s_m\leq e.$$
  Associated to $E$ and $s$ we have the flag  a
  
manifold  
$$ \pi : \mathcal{F}l_s(E) \to X. $$
For $x \in X$ the fiber $\pi^{-1}(x)$
is the incomplete flag variety given by nested subspaces $V_{s_i}$ of codimension $s_i$ satisfying:  
$$V=V_{s_0}\supset V_{s_1}\supset\ \ldots \supset V_{s_m}=\{0\}$$
of the 
fiber $V=E_x$ of $E.$

 Let  $a$  be a partition such that 
$a_{{s_{{j-1}}+1}}=\dots =a_{s_j}$,  $1\leq j\leq m$, we set 
$$ \it Q_s^a= \it Q^{a_{s_1}}_{s_1}\otimes \it Q^{a_{s_2}}_{s_2} \otimes \ldots \otimes \it Q^{a_{s_m}}_{s_m},$$ 
where $$ \it Q_{s_j}=\det(V_{s_{j-1}}/V_{s_{j}}), \ \ 1\leq j\leq m.$$ 

\bigskip

When restricted to a fiber of $\pi,$ the line bundle $\it Q_s^a$ is very ample.

\bigskip

 Note that  [see \cite{Dem}], for any partion $a$ 
\begin{equation}\label{eq1} \pi_*(\it Q_{s}^a) = \SSS_{a}(E) \\
\hskip1cm R^q\pi_*(\it Q_s^a) = 0\ \ \mathit{for}\ \ q>0,\hskip2cm 
\end{equation}
\label{pi}

\smallskip

In this paper we prove:

\begin{thm}\label{main1}
	If the line bundle $\it Q_s^a$ on  $\mathcal{F}l_s(E)$ is big, 
	then  the vector bundle 
	$\pi_*(\it Q_s^a)=\SSS_{a}E$ on $X$  is big.
\end{thm}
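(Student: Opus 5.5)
We reduce bigness of $\SSS_a E$ to bigness of the tautological line bundle $\mathcal{O}_{\mathbb{P}(\SSS_a E)}(1)$ on the projective bundle $p:\mathbb{P}(\SSS_a E)\to X$ of rank-one quotients. By definition, $\SSS_a E$ is big exactly when this line bundle is big. Write $\dim X=n$, $\dim \mathcal{F}l_s(E)=n+f$ with $f$ the fibre dimension, and $r=\operatorname{rank}\SSS_a E$. The key construction is the natural map over $X$ induced by the evaluation surjection $\pi^*\pi_*Q_s^a=\pi^*\SSS_aE\to Q_s^a$:
$$\iota:\mathcal{F}l_s(E)\longrightarrow \mathbb{P}(\SSS_a E), \qquad \iota^*\mathcal{O}_{\mathbb{P}(\SSS_a E)}(1)=Q_s^a .$$
The evaluation map is surjective because $Q^a_s$ is very ample, hence globally generated, on each fibre. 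For the same reason $\iota$ restricted to a fibre is the Plücker-type embedding. So $\iota$ is a closed embedding of $\mathcal{F}l_s(E)$ as a subvariety $Z$ of dimension $n+f$ inside $P=\mathbb{P}(\SSS_aE)$, which has dimension $n+r-1$.

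The plan is to compare sections growth-wise. Set $L=\mathcal{O}_P(1)$. Using \eqref{eq1} for multiples of $a$, we have
$$H^0(\mathcal{F}l_s(E),(Q_s^a)^k)=H^0(X,\SSS_{ka}E).$$
On the other side,
$$H^0(P,L^k)=H^0(X,S^k(\SSS_aE)).$$
The multiplication map $S^k(\SSS_aE)\to \SSS_{ka}E$ is a surjection of bundles; it is the fibrewise surjection given by projective normality of flag varieties. It corresponds to restriction of sections from $P$ to $Z$. However, surjectivity on bundles does not give surjectivity on global sections, and in any case we need a bound in the opposite direction. So instead I would argue with the numerical/geometric characterization of bigness: a line bundle $L$ on $P$ is big iff $L^k\otimes \mathcal{O}_P(-H)$ has a section for some ample $H$ and $k\gg0$. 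Equivalently, $L$ is big iff its restriction to a general member of a covering family is big and one can lift sections.

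The concrete route I will try is \emph{bigness via Kodaira's lemma on $Z$ pushed forward}. Since $Q^a_s$ is big, write $(Q^a_s)^k=\pi^*A\otimes \mathcal{O}(D)$ with $A$ ample on $X$ and $D$ effective, for some $k$. Pushing forward, $\SSS_{ka}E\otimes A^{-1}$ has a nonzero section. Then I would use the surjection $S^k(\SSS_aE)\to\SSS_{ka}E$, and pass to a splitting after a suitable symmetric power, to get a nonzero section of $S^{km}(\SSS_aE)\otimes A^{-m}$. This shows $\mathcal{O}_P(1)$ is big on a subvariety dominating... At this stage the argument only gives ``$L$ is big along the image directions''.

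The \textbf{main obstacle} is exactly this: bigness of $L|_Z$ does not imply bigness of $L$ on the larger $P$ when $f<r-1$. The fix I would attempt is to use the $GL(E_x)$-orbit structure. $\SSS_aE$ is a quotient of $E^{\otimes |a|}$ (or of $S^{a_1}E\otimes\cdots$), and the cone over $Z$ spans the fibres linearly. Hence $L$ is big iff the secant varieties of $Z$ fill $P$, and a sum of $r$ general points of $Z$ gives a dominant map
$$Z\times_X\cdots\times_X Z \dashrightarrow P .$$
Bigness is preserved under pullback to the fibre product, where $L$ pulls back to a tensor product of copies of $Q^a_s$. That product is big on the fibre product (a product of big bundles is big on a fibre product over $X$), and a dominant generically finite image of a big line bundle is big. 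Controlling generic finiteness and the relation between $L$ and the tensor product under the secant map is where I expect the real difficulty. If it fails, I would fall back on Nakayama's criterion: $\SSS_aE$ is big iff $S^k(\SSS_aE)\otimes A^{-1}$ is generically globally generated. I would try to deduce this from Kodaira's lemma above together with the $GL$-equivariant splitting $S^k(\SSS_aE)\supset\SSS_{ka}E$ in characteristic zero, which gives an actual direct summand, so that sections do lift.
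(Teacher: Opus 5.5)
Your first steps are exactly the paper's argument. Kodaira's lemma gives a nonzero section of $(Q_s^a)^k\otimes\pi^*A^{-1}$, and \eqref{eq1} turns it into a nonzero section of $\SSS_{ka}(E)\otimes A^{-1}$. In characteristic zero $\SSS_{ka}(E)$ is a direct summand of $S^k(\SSS_a(E))$, so you get $H^0(X,S^k(\SSS_aE)\otimes A^{-1})\neq 0$ with no extra symmetric power needed.

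The gap is that you then misread what this section gives. It is not a section living only on $Z$. Since $H^0(P,\mathcal{O}_P(k)\otimes p^*A^{-1})=H^0(X,S^k(\SSS_aE)\otimes A^{-1})$, it is a section of $\mathcal{O}_P(k)\otimes p^*A^{-1}$ on all of $P=\mathbb{P}(\SSS_aE)$, and that alone forces $\mathcal{O}_P(1)$ to be big. This is the missing step, Proposition~\ref{prop2}, and it is how the paper finishes. Because $\mathcal{O}_P(1)$ is $p$-ample, $\mathcal{O}_P(1)\otimes p^*A^{N}$ is ample for $N\gg 0$, and
\[ \mathcal{O}_P(kN+1)=\bigl(\mathcal{O}_P(1)\otimes p^*A^{N}\bigr)\otimes\bigl(\mathcal{O}_P(k)\otimes p^*A^{-1}\bigr)^{\otimes N} \]
is ample tensor effective, hence big. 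So the ``main obstacle'' you identify, bigness of $L|_Z$ versus bigness of $L$ on $P$, never arises. The comparison between $Z$ and $P$ enters only through \eqref{eq1} and the splitting.

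The two repairs you propose instead would not close the argument.

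\emph{The secant construction.} As formulated there is no morphism $Z\times_X\cdots\times_X Z\to P$ pulling $\mathcal{O}_P(1)$ back to a tensor product of copies of $Q_s^a$, since ``adding'' points of a projective space requires choosing scalars. Moreover, $\dim(Z\times_X\cdots\times_X Z)=n+rf$ typically exceeds $\dim P=n+r-1$, so generic finiteness fails as well.

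\emph{The fallback criterion.} It is false as an equivalence. Generic global generation of $S^k(\SSS_aE)\otimes A^{-1}$ is V-bigness, which is strictly stronger than bigness, and it cannot be derived from bigness of $Q_s^a$. For instance, take $E=A\oplus\mathcal{O}_X$ of rank $2$ and $a=(1,0)$. Then $\mathcal{F}l_s(E)=\mathbb{P}(E)$ and $Q_s^a=\mathcal{O}_{\mathbb{P}(E)}(1)$, which is big by the argument above, because $E\otimes A^{-1}$ has a section. Yet every $S^r(S^mE)\otimes A^{-r}$ contains the summand $A^{-r}$, which has no nonzero sections. The theorem needs only one nonzero section of some $S^k(\SSS_aE)\otimes A^{-1}$, not generic generation.
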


\begin{thm}\label{main2}
	Let $E$ be a vector bundle on $X.$ Assume $\SSS_a(E)$ is 
		V-big on $X.$ Then the line bundle $\it Q_s^a$ on  $\mathcal{F}l_s(E)$ is big.
\end{thm}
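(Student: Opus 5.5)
The plan is to transfer the generic global generation that defines V-bigness of $\SSS_a(E)$ from $X$ to $\mathcal{F}l_s(E)$. I would use the natural surjection $\pi^*\pi_*(\it Q_s^a)\to \it Q_s^a$, and then conclude with the standard criterion that an ample line bundle tensored with an effective one is big.

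I use V-bigness of a vector bundle $F$ (Viehweg's notion) in the following form: there exist an ample line bundle $H$ on $X$, an integer $k>0$ and a nonempty Zariski open $U\subset X$ such that $S^k(F)\otimes H^{-1}$ is generated by global sections over $U$. If the paper's definition uses a different but equivalent normalization, the argument below adapts with no change. Put $F=\SSS_a(E)=\pi_*(\it Q_s^a)$ by (\ref{eq1}).

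\emph{Step 1.} Since $\it Q_s^a$ is very ample on each fiber of $\pi$ and $\pi_*\it Q_s^a=F$, the evaluation map $\pi^*F\to \it Q_s^a$ is surjective. Taking symmetric powers and composing with multiplication gives a surjection
$$\pi^*S^k(F)\longrightarrow (\it Q_s^a)^{\otimes k}.$$
Twisting by $\pi^*H^{-1}$ yields a surjection $\pi^*(S^kF\otimes H^{-1})\to L:=(\it Q_s^a)^{k}\otimes \pi^*H^{-1}$. Pulling back the sections of $S^kF\otimes H^{-1}$ that generate it over $U$ shows that $L$ is globally generated over the dense open set $\pi^{-1}(U)$. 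In particular $L$ has a nonzero section $\sigma$.

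\emph{Step 2.} Since $\it Q_s^a$ is $\pi$-ample and $H$ is ample on $X$, there is $m\ge 1$ such that $A:=\it Q_s^a\otimes\pi^*H^{m}$ is ample on $\mathcal{F}l_s(E)$. A direct computation gives
$$(\it Q_s^a)^{\otimes (km+1)} = L^{\otimes m}\otimes A .$$
For every $r\ge1$, multiplication by $\sigma^{mr}$ injects $H^0(A^{r})$ into $H^0\big((\it Q_s^a)^{(km+1)r}\big)$. Because $A$ is ample, $h^0(A^r)\ge c\,r^{N}$ for $r\gg0$, where $N=\dim \mathcal{F}l_s(E)$ and $c>0$. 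Hence $(\it Q_s^a)^{km+1}$ is big, and therefore so is $\it Q_s^a$.

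The main obstacle is Step 1. One must check the fiberwise statement that $\pi^*\SSS_a(E)\to \it Q_s^a$ is onto; this is the Borel–Weil identification, i.e.\ the fiber version of (\ref{eq1}) together with relative very ampleness. One must also make sure that the sign and normalization of $H$ in the chosen definition of V-bigness match. If the paper's definition instead asks for $S^k(F)\otimes H^{-1}$ to be only weakly positive, I would first replace $k$ by a multiple and $H$ by a power to obtain generic generation, and then run the same argument.
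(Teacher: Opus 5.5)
Your argument is correct. It has the same two-step skeleton as the paper's proof: first transfer generic global generation from $X$ to $\mathcal{F}l_s(E)$, then write a power of $Q_s^a$ as effective $\otimes$ ample. The transfer step, however, is organized differently.

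The paper works as follows.
\begin{enumerate}
\item It uses the representation-theoretic splitting $S^r(S^m(\SSS_a(E)))=\SSS_{rma}(E)\oplus R_2$ to get generic generation of the direct summand $\SSS_{rma}(E)\otimes A^{-r}$.
\item It then passes through the embedding $\mathcal{F}l_s(E)\subset\mathbb{P}(\SSS_{rma}(E))$, on which $\mathcal{O}(1)$ restricts to $(Q_s^a)^{\otimes rm}$.
\item Finally it uses the ``for all $r\geq r_0$'' clause of V-bigness to take $r$ so large that $Q_s^a\otimes\pi^*(A^r)$ is ample.
\end{enumerate}

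You instead push the generating sections forward along the quotient $\pi^*S^k(\SSS_a(E))\to(Q_s^a)^{\otimes k}$. This map factors through $\pi^*\SSS_{ka}(E)$, followed by the surjection that defines the paper's embedding. Since generic generation passes to quotients, you never need to isolate that summand; the relative evaluation surjection is enough.

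You also absorb the ample correction by raising $L$ to the $m$-th power, so a single exponent suffices. Your version therefore needs less of the definition. The reduction from the paper's definition to yours is one line: for any $r\geq r_0$ the multiplication $S^r(S^m F)\to S^{rm}F$ is surjective, so you can take $k=rm$ and $H=A^r$.

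Finally, your Step 2 proves the ``ample $\otimes$ effective implies big'' criterion, which the paper uses without comment.
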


\section{Proof of theorem 1.1}

For the proof of the Theorem we recall \cite{BLNN} the following two results:

\begin{prop}\label{prop1} Let $F$ be a big and nef vector bundle and $B$ be any line bundle on $X$ then there is an integer $r \geq 1$ such that
	$$ {\rm H}^{0}(S^r(F)\otimes B)\neq (0),$$ 
	where $S^r(F)$ is the $r$-th symmetric power of $F.$
\end{prop}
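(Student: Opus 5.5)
The plan is to move the problem to the projective bundle of $F$ and use the Kodaira lemma there. Let $p:\mathbb{P}(F)\to X$ be the projective bundle of one-dimensional quotients of $F$, and let $\mathcal{O}_{\mathbb{P}(F)}(1)$ be the tautological quotient line bundle. By definition, $F$ is big (respectively nef) when $\mathcal{O}_{\mathbb{P}(F)}(1)$ is big (respectively nef). For $r\geq 1$ we have $p_*\mathcal{O}_{\mathbb{P}(F)}(r)=S^r(F)$. The projection formula then gives
$$ {\rm H}^0(X, S^r(F)\otimes B)\cong {\rm H}^0(\mathbb{P}(F), \mathcal{O}_{\mathbb{P}(F)}(r)\otimes p^*B). $$
So it suffices to find $r\geq 1$ for which the line bundle $L_r:=\mathcal{O}_{\mathbb{P}(F)}(r)\otimes p^*B$ has a nonzero section.

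First, choose a very ample line bundle $H$ on $\mathbb{P}(F)$ such that $H\otimes p^*B$ is also very ample, hence effective. This is possible because a sufficiently high power of any ample line bundle, twisted by the fixed line bundle $p^*B$, becomes very ample.

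Second, apply the Kodaira lemma to the big line bundle $L:=\mathcal{O}_{\mathbb{P}(F)}(1)$ on the projective manifold $\mathbb{P}(F)$. For $k\gg 0$ we have $h^0(L^{k})\geq c\,k^{n}$ with $c>0$ and $n=\dim\mathbb{P}(F)$. Since $H$ is very ample, we may take a smooth divisor $D\in|H|$ and use the exact sequence
$$0\to L^{k}\otimes H^{-1}\to L^{k}\to L^{k}|_D\to 0 .$$
Here $h^0(D,L^k|_D)=O(k^{n-1})$, so ${\rm H}^0(L^{k}\otimes H^{-1})\neq 0$ for $k\gg0$.

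Third, multiply a nonzero section of $L^{k}\otimes H^{-1}$ by a nonzero section of $H\otimes p^*B$. This gives a nonzero section of
$$ (L^k\otimes H^{-1})\otimes (H\otimes p^*B)=\mathcal{O}_{\mathbb{P}(F)}(k)\otimes p^*B ,$$
and taking $r=k$ proves the claim. Note that nefness is not actually needed in this argument; only bigness of $\mathcal{O}_{\mathbb{P}(F)}(1)$ is used.

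The only real input is the Kodaira-lemma step, namely that a big line bundle absorbs any fixed ample divisor after taking powers. Everything else is formal, provided the convention for $\mathbb{P}(F)$ is the one of quotients. This is what makes $p_*\mathcal{O}(r)$ equal to $S^r(F)$ rather than $S^r(F^*)$.
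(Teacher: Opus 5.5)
Your proof is correct. It differs from the paper mainly in that the paper gives no argument here at all: it simply cites Proposition 3.3 of \cite{BLNN}.

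You instead give a self-contained proof. The identification ${\rm H}^0(X,S^r(F)\otimes B)\cong {\rm H}^0(\mathbb{P}(F),\mathcal{O}_{\mathbb{P}(F)}(r)\otimes p^*B)$ reduces the question to a line bundle on $\mathbb{P}(F)$. You then run the standard proof of Kodaira's lemma: restrict to a divisor $D\in|H|$, and compare $h^0(L^k)$, which grows like $k^{\dim\mathbb{P}(F)}$, with $h^0(D,L^k|_D)=O(k^{\dim\mathbb{P}(F)-1})$. The twist $p^*B$ is absorbed into the auxiliary very ample $H$ chosen so that $H\otimes p^*B$ is effective.

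What your route buys is that it visibly uses only bigness of $\mathcal{O}_{\mathbb{P}(F)}(1)$, not nefness, and this matters for the paper itself. In the proof of Theorem~\ref{main1}, the proposition is applied to the line bundle $Q_s^a$ on $\mathcal{F}l_s(E)$ with $B=\pi^*(A^{-1})$, and there $Q_s^a$ is only assumed big, not nef. So the proposition as stated here, with the nef hypothesis, does not literally cover that application, whereas your version does.

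One minor point: the definition of bigness gives $h^0(L^k)\ge ck^n$ a priori only for large $k$ in the semigroup $\{k : h^0(L^k)\neq 0\}$. Getting it for all $k\gg 0$ needs a small extra argument. Since you only need a single $k$ with $h^0(L^k)>h^0(D,L^k|_D)$, this does not affect your proof.
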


\begin{proof}
	See Proposition 3.3. \cite{BLNN}.
\end{proof} 

\begin{prop}\label{prop2} Let $F$ be a vector bundle and $A$ be an ample line bundle  on $X$ then $F$ is big if and only if 
	$$ {\rm H}^{0}(S^r(F)\otimes A^{-1})\neq (0)$$ 
	for some $r \geq 1,$
	where  $A^{-1}$
	is the dual of the bundle $A.$
\end{prop}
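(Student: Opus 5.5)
The plan is to reduce the statement to Kodaira's characterization of big line bundles on the projective bundle. Let $p:\mathbb{P}(F)\to X$ be the projective bundle of one-dimensional quotients of $F$, and let $L=\mathcal{O}_{\mathbb{P}(F)}(1)$. By definition $F$ is big exactly when $L$ is big. For $r\ge 0$ and any line bundle $B$ on $X$, the projection formula together with $p_*L^r=S^r(F)$ gives
$$ {\rm H}^0(\mathbb{P}(F),L^r\otimes p^*B)={\rm H}^0(X,S^r(F)\otimes B). $$
I will use two standard facts. First, for $k\gg 0$ the line bundle $H=L\otimes p^*A^k$ is ample on $\mathbb{P}(F)$. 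Second, Kodaira's lemma: a line bundle $M$ on a projective variety is big if and only if $M^m\otimes H^{-1}$ has a nonzero section for some $m\ge1$ and some (equivalently, any) ample $H$. Equivalently, $M$ is big if and only if some positive power of $M$ is of the form effective $\otimes$ ample.

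\emph{``If'' direction.} Suppose $0\neq s\in {\rm H}^0(S^r(F)\otimes A^{-1})$. Then $s$ gives a nonzero section of $L^r\otimes p^*A^{-1}$ on $\mathbb{P}(F)$. Fix $k$ with $H=L\otimes p^*A^k$ ample. We have
$$ L^{rk+1}=(L^r\otimes p^*A^{-1})^{k}\otimes H, $$
which is effective tensored with ample. Hence $L$ is big, so $F$ is big.

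\emph{``Only if'' direction.} Suppose $L$ is big. By Kodaira's lemma applied with the ample $H$ above, there is $m\ge1$ with ${\rm H}^0(L^m\otimes H^{-1})={\rm H}^0(L^{m-1}\otimes p^*A^{-k})\neq0$.

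First I rule out $m=1$. In that case $p^*A^{-k}$ would have a nonzero section, hence so would $A^{-k}$ on $X$. This is impossible when $\dim X>0$, because $A$ is ample. So $r_0=m-1\ge1$ and we get a nonzero section $t$ of $S^{r_0}(F)\otimes A^{-k}$.

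It remains to improve the twist from $A^{-k}$ to $A^{-1}$. Since the hypothesis does not guarantee that $A^{k-1}$ has a section, I use powers. For $N\gg0$ the bundle $A^{Nk-1}$ is globally generated, because $A$ is ample; choose a nonzero section $u$ of it. On $\mathbb{P}(F)$ the product $t^N\cdot p^*u$ is a section of $L^{Nr_0}\otimes p^*A^{-1}$. It is nonzero because $\mathbb{P}(F)$ is integral. Pushing forward gives ${\rm H}^0(S^{Nr_0}(F)\otimes A^{-1})\neq 0$, as required.

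The main obstacle is this last twist-adjustment step. Kodaira's lemma naturally produces the twist $A^{-k}$, with $k$ dictated by the ampleness of $L\otimes p^*A^k$, rather than $A^{-1}$. The trick is to pass to $N$-th powers, where $A^{Nk-1}$ eventually has sections. This works because multiplication of sections corresponds to the product $S^{a}F\otimes S^bF\to S^{a+b}F$, which is realized concretely on $\mathbb{P}(F)$. The other ingredients, namely ampleness of $L\otimes p^*A^k$ for large $k$ and the identification of sections via $p_*$, are standard.
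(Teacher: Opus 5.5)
Your proof is correct. The paper gives no argument of its own to compare against: it proves Proposition~\ref{prop2} only by citing Proposition~3.4 of \cite{BLNN}.

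You supply a self-contained proof on $p:\mathbb{P}(F)\to X$ using the projection formula and Kodaira's lemma. You also isolate the one real subtlety. In general $p^*A$ is not ample on $\mathbb{P}(F)$, so Kodaira's lemma applied with $H=L\otimes p^*A^k$ only produces the twist $A^{-k}$, and your $N$-th power trick is what brings it back to $A^{-1}$.

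There is one small loose end. When $\dim X=0$ you cannot exclude $m=1$, so you could end with $r_0=0$; the claim is trivial there because $A$ is trivial. You can avoid the case distinction entirely by writing $L^{mj}\otimes p^*A^{-1}=(L^m\otimes H^{-1})^{j}\otimes(H^{j}\otimes p^*A^{-1})$ and using that $H^{j}\otimes p^*A^{-1}$ is globally generated for $j\gg0$.

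Compared with the bare citation, your route shows that only the relative ampleness of $L$ and $p_*L^r=S^r(F)$ are used. So it carries over unchanged to $Q^a_s$ on $\mathcal{F}l_s(E)$:
\begin{itemize}
\item your ``if'' direction is exactly the effective-times-ample decomposition the paper uses at the end of the proof of Theorem~\ref{main2};
\item your ``only if'' direction directly gives the nonvanishing of ${\rm H}^0((Q^a_s)^{\otimes n}\otimes\pi^*A^{-1})$ that the proof of Theorem~\ref{main1} attributes to Proposition~\ref{prop1}, even though $Q^a_s$ is not assumed nef there.
\end{itemize}
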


\begin{proof} See Proposition 3.4. \cite{BLNN}.
\end{proof}

  Let $A$ be an ample bundle on $X$
By Proposition(\ref{prop1}) the assumption that the line bundle $\it Q_s^a$ on  $\mathcal{F}l_s(E)$ is big implies there is an integer $n \geq 1$ such that 
$$ {\rm H}^{0}((\it Q_s^a)^n\otimes \pi^*(A^{-1}))\neq (0)$$
  This implies from \ref{eq1} that 
$$ {\rm H}^{0}((\SSS_{na}(E)))\otimes A^{-1})\neq (0).$$
On the other hand $(\SSS_{na}(E))\otimes A^{-1}$ is a direct summand 
of $S^n(\SSS_{a}(E))\otimes A^{-1}.$ Hence one has 
$$ {\rm H}^{0}((S^n(\SSS_{a}(E)))\otimes A^{-1})\neq (0).$$
Hence by Proposition(\ref{prop2}) we conclude that $\SSS_{a}(E)$ is big.

\section{Proof of theorem 1.2}
Before giving the proof,
we  recall the definition of V-big vector bundle (see \cite{BKKMSU}):

\begin{Def} Let $G$ be a vector bundle and $A$ be an ample line bundle on a projective variety $X.$ The bundle $G$ is said to be V-big,  if 
for some positive integers $m$ and $r_0$ the  bundle $$S^r(S^m(G))\otimes A^{-r}$$
is generated  by its global sections on a non-empty open set $U$ of  $X$ for all $r \geq r_0.$
\end{Def}

Now since $\it Q_s^a$ is $\pi$-ample, where  $$ \pi : \mathcal{F}l_s(E) \to X $$ is the 
natural projection,  for any ample line bundle 
	$A$ on $X$ the bundle $\it Q_s^a\otimes \pi^*(A^r)$ is ample for all large enough integer $r.$
Let $\SSS_a(E)$ be V-big on $X$ and $A$ be an ample line bundle on $X.$  
Then by definition of V-big there exists an integers $m > 0$ and $r_0$ such that for all $r \geq r_0$ the vector bundle bundle 
$$S^r(S^m(\SSS_a(E)))\otimes A^{-r}$$
is generated by its global sections on a non-empty open set $U $ of  $X.$ 

But we know from representation theory 
that 
$$S^m(\SSS_a(E)) = \SSS_{ma}(E)\oplus R_1$$
 and 
$$S^r(S^m(\SSS_a(E))) = \SSS_{rma}(E)\oplus R_2$$
for some vector bundles $R_1, R_2$ on $X.$
On the other hand if $F, F_1, F_2$ are vector bundles on $X$
such that $F = F_1 \oplus F_2,$ then $F$ is generated on a non-empty open set $V$ of  $X$ by global sections of $F$ if and only if 
$F_1$ is generated on $V$ by global sections of $F_1$ and
$F_2$ is generated  on $V$ by global sections of $F_2.$
Thus we conclude that the bundle 
$$\SSS_{rma}(E)\otimes A^{-r}$$
is generated its global sections on a non-empty open set $U$ of $X,$  i.e., the natural map
$${\rm H}^0(X, \SSS_{rma}(E)\otimes A^{-r}) \to (\SSS_{rma}(E)\otimes A^{-r})_x$$
is surjective for all $x \in U.$ Hence on $\mathbb{P}(\SSS_{rma}(E))$ 
the natural map 
$${\rm H}^0(\mathbb{P}(\SSS_{rma}(E)), \mathcal{O}_{\mathbb{P}(\SSS_{rma}(E))}(1)\otimes \pi_{1}^{*}(A^{-r})) \to \mathcal{O}_{\mathbb{P}(\SSS_{rma}(E))}(1)\otimes \pi_{1}^{*}(A^{-r})_y$$
is surjective for all $y \in \pi_1^{-1}(U),$ where 
$$\pi_1: \mathbb{P}(\SSS_{rma}(E)) \to X $$
is the natural projection. We also have an inclusion 
$$\mathcal {F}l_s(E)\subset \mathbb{P}(S_{rma}(E))$$
such that $\mathcal{O}_{\mathbb{P}(S_{rma}(E))}(1)$
restricted to $\mathcal {F}l_s(E)$ is equal to $(\it Q_s^a)^{\otimes rm}$ and $\pi_1 = \pi$ on $\mathcal {F}l_s(E).$ Also, by (\ref{eq1})
we can deduce that for any integer $n\geq 1$
$${\rm H}^0(\mathcal {F}l_s(E), (\it Q_s^a)^{\otimes n}\otimes \pi^{*}(B)) = {\rm H}^0(X, \SSS_{na}(E)\otimes B)$$
for any line bundle $B$ on $X.$Thus we conclude 
$${\rm H}^0(\mathcal {F}l_s(E), (\it Q_s^a)^{\otimes rm}\otimes \pi^{*}(A^{-r})) \to (\it Q_s^a)^{\otimes rm}\otimes \pi^{*}(A^{-r})_y $$
is surjective for all $y \in \pi^{-1}(U).$ In particular 
$${\rm H}^0(\mathcal {F}l_s(E), (\it Q_s^a)^{\otimes rm}\otimes \pi^{*}(A^{-r})) \neq (0).$$
Since
$$(\it Q_s^a)^{\otimes rm}\otimes \pi^{*}(A^{-r})
= (\it Q_s^a)^{\otimes (rm+1)}\otimes (\it Q_s^a)^{-1} \otimes \pi^{*}(A^{-r})$$
and 
 $(\it Q_s^a)\otimes \pi^{*}(A^{r})$ is ample for large $r,$
we see that the line bundle $(\it Q_s^a)^{\otimes (rm+1)}$ is equal to tensor product of an ample
bundle with an effective line bundle and hence the line bundle $(\it Q_s^a)$
is big.
\bigskip

If $A$ is an ample line bundle on $X$ and $M$ is any line bundle  then $A^m\otimes M$ is ample for all large $m.$ \\

This property is also holds for "big" :

\begin{lem}\label{lem1}
Let $L$ be a big line bundle and $M$ be any line bundle on $X.$ Then for any large enough positve integer $N$ the bundle $L^N \otimes M$ is big. 
\end{lem}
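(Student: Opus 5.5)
The plan is to use Kodaira's lemma: a line bundle $L$ on the projective manifold $X$ is big if and only if some positive power of it can be written as an ample line bundle tensored with an effective one, i.e. $L^{k}=H\otimes \mathcal{O}_X(D)$ with $H$ ample and $D$ effective. This is the same characterization used at the end of the proof of Theorem \ref{main2}. The idea is to absorb $M$ into the ample part $H$ by choosing the power large enough.

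The steps are as follows.

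\emph{Step 1.} Since $L$ is big, Kodaira's lemma gives an integer $k\geq 1$, an ample line bundle $H$ and an effective divisor $D$ with $L^{k}= H\otimes \mathcal{O}_X(D)$.

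\emph{Step 2.} Since $H$ is ample, there is an integer $q_0$ such that $H^{q}\otimes M$ is ample for all $q\geq q_0$. Moreover, for every $j$ with $0\leq j\leq k-1$ there is an integer $q_j$ such that $H^{q}\otimes L^{j}\otimes M$ is ample for all $q\geq q_j$. This is the property recalled just before the lemma, applied to the finitely many line bundles $L^{j}\otimes M$.

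\emph{Step 3.} Write an arbitrary large integer as $N=qk+j$ with $0\leq j\leq k-1$. Then
$$L^{N}\otimes M = L^{qk}\otimes L^{j}\otimes M = \bigl(H^{q}\otimes L^{j}\otimes M\bigr)\otimes \mathcal{O}_X(qD).$$
Take $N\geq k\cdot \max_j q_j + k$, so that $q\geq \max_j q_j$. Then the first factor is ample and $qD$ is effective, so $L^{N}\otimes M$ is ample tensor effective.

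\emph{Step 4.} Conclude by the converse direction of Kodaira's lemma: a line bundle of the form ample $\otimes$ effective is big. Indeed, if $A'$ is ample and $D'\ge0$, then $h^0\bigl((A'\otimes\mathcal{O}(D'))^{p}\bigr)\geq h^0(A'^{p})$, which grows like $p^{\dim X}$.

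The only real subtlety is Step 3. A single power $L^k$ decomposes nicely, but the statement is for \emph{all} large $N$, not just multiples of $k$. The residue trick handles this: there are only finitely many residues $j$ modulo $k$, so the required lower bound on $q$ is uniform in $j$. Everything else is a direct application of Kodaira's lemma and the standard ampleness property already quoted in the paper.
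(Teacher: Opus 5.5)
Your proof is correct and uses the same route as the paper: write a power of $L$ as ample $\otimes$ effective by Kodaira's lemma, then absorb $M$ into the ample factor (the paper concludes bigness by citing Proposition \ref{prop2}, where you use the growth of $h^0$ directly). Your residue step $N=qk+j$ is an improvement, because the paper's argument as written only shows that $L^{mn}\otimes M$ is big, i.e.\ only for $N$ a multiple of $n$, while the statement claims it for all sufficiently large $N$.
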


\begin{proof}
 note that $L$ is big implies for any ample line bundle $A$ on $X$  there is an positive integer $n$ and an effective line bundle $E$ on $X$ such that $L^n = E \otimes A.$ 
Now choose a positive  integer $m$ such that $A_1 = A^m\otimes M$ is ample. Hence 
$$X\otimes L^{mn} = E^m (\otimes A^m \otimes M) = E^m \otimes A_1.$$
Since $E^m$ is effective hence from  Proposition(\ref{prop2}) we conclude that $X\otimes L^{mn}$ is big.
\end{proof}

In view of the  Lemma (\ref{lem1}) we can ask the following

\bigskip
{\bf Question}: Let $L$ be a big and nef line bundle on a projective variety $X$ and $M.$ Is the line bundle $L^n \otimes M$ nef and big for some $n$ large? 

\bigskip

Unfortunately the answer in general is  negative as shown by the following example

\bigskip
{\bf Example:} Let $C$ be a non singular  projective curve of genus $g \geq 1$ over  $\mathbb{C}$  and $L$ be a degree zero non torsion line bundle on $C.$
For  $P \in C$ consider  the vector bundle $E = L \oplus {\mathcal O}_C (P)$ and set $X = \mathbb{P}(E).$ 
On $X$ the line bundle $\mathcal{O}_{\mathbb{P}(E)}(1)$ 
is big and nef (see, Lemma 2.3.2. \cite{Laz}). But $p^*(\mathcal{O}_C(-P))\otimes \mathcal{O}_{\mathbb{P}(E)}(n) $ is not nef for any $n,$ where 
$p : X \to C$ is the natural projection.

\bigskip

{\bf Acknowledgments}: ``This work was supported in part by the Labex CEMPI (ANR-11-LABX-0007-01)’’.

  \end{document}